\documentclass[smallextended,12pt]{svjour3}       
\smartqed  
\usepackage{graphicx}
\usepackage{epsfig}
\usepackage[left=2.5cm,right=2.5cm]{geometry}
\usepackage{amssymb}
\usepackage{mathrsfs}
\usepackage{indentfirst}
\usepackage{graphicx}
\usepackage{amsmath}
\usepackage{color}
\usepackage{booktabs}
\usepackage{marvosym}
\usepackage{enumitem}
\usepackage{subfigure}
\usepackage{thmtools}
\usepackage{subeqnarray,setspace}
\setlength{\parindent}{0.6cm}
\usepackage{epstopdf}
%
%
%
\journalname{}
\linespread{1.4}
\begin{document}

\title{A robust optimal control problem with moment constraints on distribution:
theoretical analysis and an algorithm
}


\author{Jianxiong Ye \and
      Lei Wang \and Changzhi Wu\\ \and Jie Sun \and Kok Lay Teo \and  Xiangyu Wang 
}


\institute{Jianxiong Ye \at
              School of Mathematics and Computer Sciences, Fujian Normal University,
  Fuzhou, People's Republic of China\\
School of Energy and Power Engineering, Huazhong University of Science and Technology, Wuhan, People's Republic of China \\
               \email{jxye@fjnu.edu.cn}             \\
Lei Wang({\large\Letter}) \at
              School of Mathematical Sciences, Dalian University of Technology,
  Dalian, People's Republic of China \\
              \email{wanglei@dlut.edu.cn}             \\
             Changzhi Wu, Xiaoyu Wang\at
              Australasian Joint Research Centre for Building Information Modelling, School of Built Environment, Curtin University, Australia\\
\email{C.Wu@exchange.curtin.edu.au}\\
Jie Sun, Kok Lay Teo\at
              Department of Mathematics and Statistics, Curtin University, Perth, Australia\\
              \email{jie.sun@curtin.edu.au}\\
Kok Lay Teo\at
              \email{K.L.Teo@curtin.edu.au}\\
              Xiaoyu Wang\at
            \email{Xiangyu.Wang@curtin.edu.au}\\
}

\date{Received: date / Accepted: date}

%

\maketitle

\begin{abstract}
We study an optimal control problem in which both the objective
function and the dynamic constraint contain an uncertain
parameter. Since the distribution of this uncertain parameter  is
not exactly known, the objective function is taken as the
worst-case expectation over a set of possible distributions of the
uncertain parameter. This  ambiguity set of distributions is, in
turn, defined by the first two moments of the random variables
involved. The optimal control is found by minimizing the
worst-case expectation over all possible distributions in this
set. If the distributions are discrete, the stochastic min-max
optimal control problem can be  converted into a convensional
optimal control problem via duality, which is then approximated as
a finite-dimensional optimization problem via the control
parametrization. We derive necessary conditions of optimality and
propose an algorithm to solve the approximation optimization
problem. The results of discrete probability distribution are then
extended to the case with one dimensional continuous stochastic
variable by applying the control parametrization methodology on
the continuous stochastic variable, and the convergence results
are derived.
 A numerical example
is present to illustrate the potential application of the proposed
model and the effectiveness  of the algorithm.
 \keywords{  Distributionally robust optimal control
 \and Optimality necessary conditions \and Uncertainty parameter\and Duality\and Control parametrization}

\noindent {\bf AMS Subject Classification} {~~~34H05 $\cdot$ 49M25 $\cdot$ 49M37 $\cdot$ 93C41 }
\end{abstract}

\section{Introduction}
\label{intro}\noindent Ideas  to immunize optimization problems
against perturbations in model parameters arose as early as in
1970s. A worst-case model for linear optimization such that
constraints are satisfied under all possible perturbations of the
model parameters was proposed in \cite{Soyster}. A common approach
to solving this type of models is to transform the original
uncertain optimization problem into a deterministic convex
program. As a result, each feasible solution of the new program is
feasible for all allowable realizations of the model parameters,
therefore the corresponding solution tends to be rather
conservative and in many cases even infeasible. For a detailed
survey, see the recent monograph \cite{bental}.

For traditional stochastic programming approaches, uncertainties
are modeled as random variables with known distributions. In very
few cases,  analytic solutions are obtained (see, e.g. Birge and
Louveaux \cite{ISP}, Ruszczynski and Shapiro \cite{SP}). These
approaches may not be always applicable in practice, as the exact
distributions of the random variables are usually unknown.

In the framework of robust optimization,  uncertainties are
usually modeled by uncertainty sets, which specify certain ranges
for the random variables. The worst-case approach is used to
handle the uncertainty. It is often computationally advantageous
to use the ``robust" formulation of the problem. However, the use
of  uncertainty sets as the possible supporting sets for the
random variables is restrictive in practice; it leads to
relatively conservative solutions.

The recently developed ``distributionally robust" optimization
approach combines the philosophies of traditional stochastic and
robust optimization -- this approach does not assume uncertainty
sets, but keep using the worst-case methodology. Instead of
requiring the shape and size of the support sets for the random
variables, it assumes that the distributions of the random
variables satisfying a set of constraints, often defined in terms
of  moments and supports. Since the first two moments can usually
be  estimated via statistical tools, the distributionally robust
model appears to be more applicable in practice. Furthermore,
since it takes the worst-case expected cost, it inherits
computational advantages from robust optimization. Due to these
advantages, distributionally robust optimization has attracted
more and more attention in operations research community
\cite{Goh14,Sim,Sim07,Sim09,El,Ye,Zymler,Mehrotra}.

 In this
paper, we propose a novel optimal control model with an uncertain
parameter for which its exact distribution is unknown. However, it
is assumed that  the mean and the standard deviation of the
uncertain parameter are known. The optimal control is found by
minimizing the worst-case expectation with respect to all
distributions in an ``ambiguity set". Both the problems with
discrete probability distribution and with continuous probability
distribution will be discussed. We first consider the case of
discrete probability distribution, in which the min-max optimal
control problem  is transformed  into an equivalent finite
dimensional minimization problem via duality. Then the necessary
conditions of optimality are derived. The results for the case of
discrete probability distribution are then extended to the case
with one dimensional continuous stochastic variable. The control
parametrization methodology is applied to parameterise the
continuous stochastic variable. Finally,  an example is solved
showing the potential application of the proposed optimal control
framework and the effectiveness of the algorithm.

\section{{\color{blue}Problem statement}}
\label{sec:1}
 For simplicity, we only discuss optimal control of
 dynamical systems with a single uncertain parameter.
 However, the results  can be directly extended to cases
 involving multiple independent uncertain parameters.

To begin with, consider a system of  ordinary differential
equations with an uncertain  parameter as follows:
\begin{align}\label{s1} \left\{\begin{array}{ccc}
\dot{x}(t)=&f(x,u,p)\\
x(0)=&x^0~~~~~~~~
\end{array}
\right.
\end{align}
where $x\in R^{n_x}$ is the state vector, $u\in R^{n_u}$ is the
control vector function, and $p\in R$ is an uncertain parameter.
In general, the parameter $p$ is regarded as uniquely determined.
In reality, however, this hypothesis often does not hold, since
parameter $p$ is  uncertain subject to  variability. The only
reliable information is that the value of the parameter falls
within a certain range and that its potential values follow some
statistical distribution. Our interest focuses on the following
distributionally robust optimal control problem.
\begin{eqnarray}
(\mbox{DROCP}):&&\inf_{u} \sup_{F}\,\,\,J(u,F)\triangleq\mathbb{E}_{F} h(x(t_f;u,p)) \nonumber \\
&& {\rm s.t.}\,\,\,\dot{x}(t)=f(x,u,p), \ \ \ \ t\in [0,t_f],\ \ x(0)=x^0,\label{DR-SC}
\\&&\ \ \ \ \ \ p\sim F\in \mathcal{F}(\mu,\sigma^2)=\{F:\mathbb{E}_{F}(p)=\mu,
\mathbb{E}_{F}(p-\mu)^2=\sigma^2\} ,\label{DR-FC}
\\ &&\ \ \ \ \ \ u(t)\in U\subset R^{n_u}.\label{DR-UC}
\end{eqnarray}
Here, $U$  is a compact and convex subset of $R^{n_u}$. The
difference between Problem (DROCP) and the standard optimal
control problem is that the parameter  $p$ herein is considered as
a stochastic variable  with  distribution $F$. The distribution
 $F$, however, is not exactly known. The only knowledge, which is available, is the
mean  and the standard deviation of the  distribution $F$; they
are denoted by $\mu$   and $\sigma$, respectively. The set of all
such distributions  is denoted by $\mathcal{F}(\mu,\sigma)$. Any
measurable function defined in $[0,t_f]$ with values in $U$ is
called an admissible control. Let $\mathcal{U}$ be the class of
all such admissible controls.

\begin{remark} It is sufficient to discuss  the objective function in Mayer form,
because the problems in  Bolza or Lagrange form
 can  be transformed into this form by introducing a new variable.
 See, e.g., \cite{Bolty} for a detailed description.
\end{remark}

 Throughout this paper, we make the following assumptions.
\begin{itemize}
\item[(A1)] The functions $f:R^{n_x}\times U\times R\rightarrow
R^{n_x}$ and
$h:R^{n_x}\rightarrow R$ are at least continuously differentiable with respect to all their arguments.\\
\item[(A2)]  For each fixed $p\in R$, there exist positive constants $L$ and $C$ such that the following inequality holds
$$\|f(x,u,p)\|\leq L\|x\|+C,\ \ \forall  x\in R^{n_x} \mbox{ and } u\in \mathcal{U}.$$
\end{itemize}
  From the classical differential equation theory (See, for example, Proposition 5.6.5 in \cite{Polark}), we recall that the system (\ref{s1})
admits a unique solution, $x(t;u,p)$, corresponding to each
$u\in\mathcal{U}$ and $p\sim F\in\mathcal{F}(\mu,\sigma^2)$.
Problem (DROCP) can be roughly stated as: Find a  control
$u\in\mathcal{U}$ such that the worst-case expectation from all
feasible distributions is minimized over $\mathcal{U}$. Obviously,
Problem (DROCP) is a min-max optimal control problem.

\section{Distributionally robust optimal control problem with discrete distribution}
In this section, we focus on the case of discrete distributions.
In this case, we will reformulate the distributionally robust
optimal control as an equivalent combined optimal control and
optimal parameter selection problem by using a dual
transformation. We will then develop an algorithm to solve the
resulting problem based on the parametric sensitivity functions
and the control parametrization method.

\subsection{Problem reformulation and optimality conditions}
Let $p^i$ be a possible value of the parameter $p$, and let $q_i$
be the corresponding probability, i.e., $\mathbb{P}(p=p^i)=q_i$,
$i=1,2,\cdots,m$. We first investigate the inner
$\sup$-optimization problem, in which the value of  $u$ is fixed.
In this context, there are $m$ possible system trajectories due to
$m$ different values of the parameter $p$. Let $x^i(t;u,p^i)$ be
the trajectory of system (\ref{s1}) with $p=p^i$. When there is no
confusion, $x^i(t;u,p^i)$ is written as $x^i$. Each possible
trajectory yields a corresponding  system cost
$h(x^i(t_f;u,p^i))$. The inner subproblem is to evaluate the
worst-case expectation  from all possible distributions, which is
given as follows:
 \begin{eqnarray*}
(\mbox{ISP})&&\sup_{q_i} \ \ \ \ \  \sum_{i=1}^m q_i h(x^i(t_f;u,p^i))\\
&&\mbox{s.t.} \ \ \ \ \ \sum_{i=1}^m q_i=1,\\
&&\ \ \ \ \ \ \  \ \ \sum_{i=1}^m q_ip^i=\mu,\\
&&\ \ \ \ \ \ \ \  \ \sum_{i=1}^m q_i{(p^i)}^2=\mu^2+\sigma^2,\\
&&\ \ \ \ \ \ \   \ \ q_i\geq0, \ \ \ i=1,...,m.
\end{eqnarray*}
 Note that the only variables to be optimized in the above inner
 subproblem are $q_i$, $i=1,2,\cdots,m$. Hence, the
 constraints (\ref{DR-SC}) and (\ref{DR-UC}) are not present in ISP. In addition,
 Problem (ISP) is a linear programming, and its dual is  given as follows:
\begin{eqnarray*}
(\mbox{Dual-ISP})&&\inf_{y} \ \ \ \ \ \ y^{\top}b\\
&&\mbox{s.t.} \  \ \  \  y^{\top}a^i\geq
h(x^i(t_f;u,p^i)),~~i=1,2,\cdots,m.
\end{eqnarray*}
where
\begin{align}
&b:=[1,\mu,\mu^2+\sigma^2]^{\top},
\qquad y:=[y_{1},y_2,y_3]^{\top},\nonumber\\
&a^i:=[1,p^i,{(p^i)}^2]^{\top}, \qquad i=1,2,\cdots,m.\nonumber
\end{align}

There is no duality gap between the inner subproblem and its dual
problem, since the feasible set of Problem (ISP) is nonempty and bounded.
Thus, the original Problem (DROCP) is equivalent to the following problem:
\begin{eqnarray}
(\mbox{Dual-DROCP}):&&\inf_{u,y}\,\,\, y^{\top}b \label{D1}\\
 && {\rm s.t.}\,\,\,y^{\top}a^i\geq h(x^i(t_f;u,p^i)),\ \ i=1,2,\cdots,m, \label{D2}\\
 &&\ \ \ \ \ \ \dot{x}^i(t)=f(x^i,u,p^i), \ \ \ \ t\in [0,t_f],\ i=1,2,\cdots,m,\ \ x^i(0)=x^0,\label{D3}\\
&&\ \ \ \ \ \   u\in\mathcal{U}.
\end{eqnarray}

\begin{remark} Problem (Dual-DROCP) can be regarded as a  combined
optimal control and optimal parameter selection
problem, where $u$ is the control function and $y$ is a
parameter vector to be optimized.
\end{remark}

Let $h_i:=h(x^i(t_f;u,p^i))$ and $f^i:=f(x^i,u,p^i)$.
Combining system (\ref{D3}) and the scalar inequality constraints
(\ref{D2})
to the cost function $y^{\top}b$ with multiplier functions
$\lambda(t):=[\lambda_{i,j}(t)]_{m\times n_{x}}$ and multiplier
vector $\theta:=[\theta_1,\theta_2,\cdots,\theta_m]^{\top}$ yields
the Lagrangian of Problem (Dual-DROCP) as given below.
\begin{align}
\mathcal{L}(u,y)&=y^{\top}b+\sum\limits_{i=1}^m\theta_i(y^{\top}a^i-h(x^i(t_f;u,p^i)))+\sum\limits_{i=1}^{m}\int_{0}^{t_f}\lambda^i(t)[\dot{x}^i-f^i]dt\nonumber\\
&=y^{\top}b+\sum\limits_{i=1}^m\theta_i[y^{\top}a^i-h(x^i(t_f;u,p^i))]-\sum\limits_{i}^{m}\int_{0}^{t_f}\lambda^i(t)f^idt\nonumber\\
&~+\sum\limits_{i}\lambda^i(t_f)x^i(t_f)-\sum\limits_{i}^m\lambda^i(0)x^i(0)-\sum\limits_{i}^m\int_{0}^{t_f}\dot{\lambda}^i(t)x^i(t)dt,\nonumber
\end{align}
where $\lambda^i(t):=[\lambda_{i,1}(t),\lambda_{i,2}(t),\cdots,\lambda_{i,n_{x}}(t)]$. Let $\tilde{y}=y+\epsilon\delta y$ and $\tilde{u}=u+\epsilon\delta u$. Then
\begin{eqnarray*}
\triangle\mathcal{L}&=&\mathcal{L}(u+\epsilon \delta u,y+\epsilon \delta y)-\mathcal{L}(u,y)\\
&=&\epsilon\delta y^{\top}(b+\sum\limits_{i=1}^m\theta_i a^i)-\epsilon\sum\limits_{i=1}^m\theta_i\frac{\partial h_i}{\partial x^i}\delta x^i(t_f)-\epsilon\int_{0}^{t_f}\Big[\sum\limits_{i=1}^m\lambda^i(t)\Big(\frac{\partial f^i}{\partial x^i}\delta x^i(t)+\frac{\partial f^i}{\partial u}\delta u\Big)\Big]dt\\
&+&\epsilon\sum\limits_{i=1}^m\lambda^i(t_f)\delta x^i(t_f)-\epsilon\int_{0}^{t_f}\sum\limits_{i=1}^m\dot{\lambda}^i(t)\delta x^i(t)dt+ o(\epsilon).
\end{eqnarray*}

Based on the fundamental variational principle \cite{APPL}, we have
the necessary optimality conditions of Problem (Dual-DROCP) given
in the following as a theorem.

\begin{theorem}\label{opti}
Consider Problem (Dual-DROCP). If $u^*(t)\in U$ is an optimal
control, and $x^*(t)$ is the corresponding state.  Then there
exist costate functions
$\lambda^i(t)=[\lambda_{i,1}(t),\lambda_{i,2}(t),\cdots,\lambda_{i,n_{x}}(t)]$,
$i=1,2,\cdots,m$, and a multiplier vector
$\theta=[\theta_1,\theta_2,\cdots,\theta_m]^{\top}$ with
$\theta_i\geq 0$, $i=1,2,\cdots,m$, such that
\begin{itemize}
\item[$(a)$] $b+\sum\limits_{i=1}^m\theta_i a^i=0$;

\item[$(b)$] $\dot{\lambda}^i(t)=-\lambda^i(t)\displaystyle\frac{\partial f^i}{\partial x^i}$ and the terminal condition $\lambda^i(t_f)=\theta_i\displaystyle\frac{\partial h_i}{\partial x^i}$, $\ i=1,2\cdots,m$;

\item[$(c)$] $\sum\limits_{i=1}^m\lambda^i(t)\displaystyle\frac{\partial f^i}{\partial u}=0$;

\item[$(d)$] $\theta_i\cdot(y^{\top}a^i-h(x^i(t_f;u,p^i)))=0$,
$i=1,2,\cdots,m$.
\end{itemize}
\end{theorem}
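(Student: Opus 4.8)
The plan is to begin from the first-order variation $\triangle\mathcal{L}$ already displayed above and to exploit the freedom in the yet-undetermined costate functions $\lambda^i$ to cancel the terms containing the state variations $\delta x^i$. The point is that the $\delta x^i$ are not independent directions: they are induced by $\delta u$ (and hence coupled across the index $i$ only through the shared control $u$) via the linearised dynamics, so they are not at our disposal. First I would collect the integrand terms proportional to $\delta x^i(t)$, namely $-\epsilon\int_{0}^{t_f}\sum_{i=1}^m\big(\lambda^i(t)\tfrac{\partial f^i}{\partial x^i}+\dot\lambda^i(t)\big)\delta x^i(t)\,dt$, together with the endpoint terms $\epsilon\sum_{i=1}^m\big(\lambda^i(t_f)-\theta_i\tfrac{\partial h_i}{\partial x^i}\big)\delta x^i(t_f)$. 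Imposing the adjoint dynamics $\dot\lambda^i(t)=-\lambda^i(t)\tfrac{\partial f^i}{\partial x^i}$ annihilates the integral term for every $i$, and imposing the transversality condition $\lambda^i(t_f)=\theta_i\tfrac{\partial h_i}{\partial x^i}$ annihilates the boundary term. These two choices are exactly statement $(b)$.

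After eliminating the dependent variations, the first-order variation collapses to
\[
\triangle\mathcal{L}=\epsilon\,\delta y^{\top}\Big(b+\sum_{i=1}^m\theta_i a^i\Big)-\epsilon\int_{0}^{t_f}\Big(\sum_{i=1}^m\lambda^i(t)\frac{\partial f^i}{\partial u}\Big)\delta u(t)\,dt+o(\epsilon),
\]
in which the only remaining variations are the genuinely free ones, $\delta y$ and $\delta u$. Since $y$ is unconstrained, taking $\delta u\equiv 0$ and letting $\delta y$ range over all of $R^3$ forces the bracketed coefficient to vanish, giving $b+\sum_{i=1}^m\theta_i a^i=0$, which is statement $(a)$. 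Conversely, taking $\delta y=0$ and invoking the fundamental lemma of the calculus of variations with $\delta u$ an arbitrary admissible variation yields $\sum_{i=1}^m\lambda^i(t)\tfrac{\partial f^i}{\partial u}=0$ for almost every $t\in[0,t_f]$, which is statement $(c)$. (For a control lying on the boundary of the convex set $U$ this would in general only be a variational inequality in the feasible directions; the equality in $(c)$ corresponds to the interior or unconstrained stationary case.)

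Finally, statement $(d)$ is the complementary slackness relation for the $m$ scalar inequality constraints $y^{\top}a^i\ge h(x^i(t_f;u,p^i))$, and it follows from the standard Karush--Kuhn--Tucker structure: at the optimum each multiplier obeys $\theta_i\ge0$, and the product of $\theta_i$ with the slack $y^{\top}a^i-h(x^i(t_f;u,p^i))$ vanishes, so inactive constraints carry zero multipliers. I expect the main obstacle to be the careful bookkeeping of the dependent variations $\delta x^i$, verifying that the single adjoint prescription in $(b)$ simultaneously removes every $\delta x^i(t)$- and $\delta x^i(t_f)$-term while leaving the reduced variation linear in the free directions $\delta u$ and $\delta y$. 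A secondary technical point, which I would handle by invoking a constraint qualification, is the existence of the multipliers $\theta_i$ and the sign condition $\theta_i\ge0$; the smoothness hypothesis $(A1)$ and the growth bound $(A2)$ guarantee that the variational equation is well posed and that the $o(\epsilon)$ remainder is controlled, so the first-order analysis is justified.
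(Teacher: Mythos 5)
Your proposal is correct and takes essentially the same route as the paper: the paper's ``proof'' consists precisely of the displayed first-order variation $\triangle\mathcal{L}$ of the Lagrangian followed by an appeal to the fundamental variational principle, and your argument carries out exactly that plan --- choosing the adjoint dynamics and transversality condition in $(b)$ to annihilate the dependent variations $\delta x^i(t)$ and $\delta x^i(t_f)$, then reading off $(a)$ and $(c)$ from stationarity in the free directions $\delta y$ and $\delta u$, with $(d)$ as KKT complementary slackness. You are in fact slightly more careful than the paper, since you flag the variational-inequality caveat when $u^*$ lies on the boundary of the convex set $U$ and the constraint-qualification issue behind the existence and sign of the multipliers $\theta_i\geq 0$, neither of which the paper addresses.
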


\subsection{The optimization algorithm}

Assume that $(u^*(t),y^*)$ is a solution of Problem
(Dual-DROCP). Clearly, the optimal control function, $u^*(t)$,
for Problem (Dual-DROCP) is also the optimal solution of Problem (DROCP). Then, the optimal distribution, $q^*$, can be
obtained by solving Problem (ISP) with $u=u^*(t)$. The algorithm framework
for the solution of Problem (DROCP) is presented as follows.

\begin{itemize}
\item Step 1. Solve Problem (Dual-DROCP), denote the solution by
$(u^*(t),y^*)$;
 \item Step 2. For each possible parameter
$p^i$, $i=1,2,\cdots,m$, compute the optimal trajectories,
$x(t;u^*,p^i)$, and the corresponding cost $h(x^i(t_f;u,p^i))$;
 \item Step
3. Compute the optimal solution $q^*$ of Problem (ISP) by
using linear programming solver.
\end{itemize}

\begin{remark}
Note that we can obtain the most robust optimal control
$u^*(t)$  by only solving Problem (Dual-DROCP). The corresponding ``worst''
distribution $q^*$ shall also be obtained for many practical problems. In this case, we can
estimate the distribution of the performance under the most robust
optimal control and the corresponding distribution of the uncertain parameter. Therefore,
Problem (DROCP) is solved completely by further carrying out Step 2 and Step 3
in the above algorithm framework.
\end{remark}

For the above algorithm framework, Step 2 and Step 3 can be
computed readily. Thus, the remaining problem is on how to solve Problem (Dual-DROCP).

\subsubsection{Control parametrization}

Let $0=t_0<t_1<t_2<\cdots<t_n=t_f$ be the partition grids of the time horizon $[0,t_f]$. On the control
parametrization framework, the control function $u(t)$ is approximated by a piecewise constant function or a piecewise linear function, where the heights of these approximate functions are decision variables. In fact, the control function can be approximated by a linear combination of any appropriate set of basis
functions. Thus, Problem (Dual-DROCP) is approximated as a finite-dimensional
optimization problem, where the coefficients of the basis functions are regarded as decision variables. In this paper, the control is approximated as a piecewise constant function in the form as given below:

\begin{eqnarray}\label{uv}
u^\iota(t)=\sum\limits_{k=1}^n v^{k}\chi_{I_{k}}(t), \ \ t\in[0,t_f],
\end{eqnarray}
where $v^k=[v^k_1,v^k_2,\cdots,v^k_{n_u}]^{\top}\in U$,
$I_k=[t_{k-1},t_k)$, $k=1,2,\cdots,n$, and $\chi_I$ denotes the
characteristic function of $I$. Define
$v=[({v^1})^{\top},({v^2})^{\top},\cdots,({v^n})^{\top}]^{\top}$ and
$\mathcal{V}=\prod\limits_{k=1}^n U$. Clearly, the control $u$
defined in the form of (\ref{uv}) is one to one corresponding with the $n\times
n_{u}$ control parameter vector $v$.
Let $x(t;v,p^i)$ be the solution of system (\ref{s1})
 corresponding to $(v,p^i)$.  With some abuse of notation, $x(t;v,p^i)$ is abbreviated as $x^{v,i}(t)$ or $x^{v,i}$ when no confusion can arise.
Then, the parameterized problem for Problem (Dual-DROCP) can be stated as given below:
\begin{eqnarray}
(\mbox{Discre-Dual-DROCP}):&&\inf_{v,y}\,\,\, y^{\top}b \label{DD1}\\
 && {\rm s.t.}\,\,\,y^{\top}a^i\geq h(x^i(t_f;u,p^i)), \label{DD2-new}\\
 &&\ \ \ \ \ \ \dot{x}^i(t)=f(x^i,v,p^i),   \ t\in [0,t_f],\ i=1,2,\cdots,m, \ x^i(0)=x^0, \label{DD3}\\
&& \ \ \ \ \ \ v\in\mathcal{V}.
\end{eqnarray}

\subsubsection{Gradient formulas}

Problem (Discre-Dual-DROCP) is essentially a finite-dimensional
optimization problem, which can be solved readily by various
optimization techniques. In general, the values of the objective
function and the constraint functions and their respective
gradients are required to be computed at each iteration of the
optimization procedure. The gradient of the objective function is
obvious since it is only a linear function of $y$. The gradients
of the constraint functions can be evaluated by solving either the
adjoint equations  (see, for example, \cite{Teo1989}) or
the sensitivity function  (see, for example,
\cite{Ryan2012,Feehery1998,Rose2000}). In this paper, the method based on
the sensitivity function is used. The parametric sensitivity
system and the gradient formulas are given in the following as a
theorem.
\begin{theorem}\label{para}
Consider system (\ref{DD3}). Let $x(t;v,p^i)$ be the solution
and let $n_v=n_u\times n$ be the dimension of $v$. Let
$s^j(t;v,p^i)=[s_1^j(t;v,p^i),s_2^j(t;v,p^i),\cdots,s_{n_x}^j(t;v,p^i)]^{\top}$
be the parametric sensitivity function of system (\ref{DD3})
with respect to $v_j$,  i.e.,
\begin{eqnarray}\label{psf}
s^j(t;v,p^i)=\frac{\partial x(t;v,p^i)}{\partial v_j}, \ \ j=1,2,\cdots,n_v.
\end{eqnarray}
Then, $s^j(t;v,p^i)$ is the unique solution of  the following differential equation system
\begin{align}\label{ps}
\left\{
\begin{array}{ccc}
\displaystyle\frac{d s^j}{dt}&&=\displaystyle\frac{\partial f}{\partial x}(x^i,v,p^i)s^j+\displaystyle\frac{\partial f}{\partial u}(x^i,v,p^i)E_l\chi_{I_k}(t),\ \ t\in[0,t_f],\\
s^j(0)&&=0 .~~~~~~~~~~~~~~~~~~~~~~~~~~~~~~~~~~~~~~~~~~~~~~~~~~~~~~~~~~~
\end{array}\right.
\end{align}
where $I_k:=[t_{k-1},t_k)$, $k=[\frac{j}{n_u}]+1$, $l=j \mbox {
mod } n_u$, $E_l$ is an $n_u$-dimensional column vector whose
$l$-th component  is one and all other components are zeros.

Furthermore, the gradients of the constraint functions
$h(x^{v,i}(t_f))$,  $i=1,2,\cdots,m$, with respect to $v$,  are
given by
\begin{eqnarray}
\nabla_v h(x^{v,i}(t_f))=\frac{\partial h}{\partial x}S(t_f),
\end{eqnarray}
where $S=[(s^1)^{\top},(s^2)^{\top},\cdots,(s^{n_v})^{\top}]^{\top}$,
and its components are given by (\ref{ps}).
\end{theorem}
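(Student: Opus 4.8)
The plan is to derive the sensitivity system in (\ref{ps}) by differentiating the state equation (\ref{DD3}) with respect to the scalar decision variable $v_j$, and then to obtain the gradient formula by a single chain-rule evaluation at the terminal time. The whole argument rests on the solution map $v\mapsto x(\cdot;v,p^i)$ being continuously differentiable, so the first task is to establish this. Under (A1) the field $f$ is $C^1$ jointly in its arguments, and the parametrized control $u^{\iota}(t)=\sum_{k=1}^{n}v^{k}\chi_{I_k}(t)$ is affine in $v$; hence the right-hand side of (\ref{DD3}) is $C^1$ in $(x,v)$. The growth condition (A2), via Gronwall's inequality, keeps the trajectory bounded on all of $[0,t_f]$ for every admissible $v$, ruling out finite-time blow-up. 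These two facts let me invoke the classical theorem on differentiable dependence of ODE solutions on parameters (cf. \cite{Polark}): $x(t;v,p^i)$ is $C^1$ in $v$, and its partial derivative $s^j=\partial x/\partial v_j$ exists.

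Working with the integral form $x(t;v,p^i)=x^{0}+\int_{0}^{t}f(x^i,u^{\iota}(\tau),p^i)\,d\tau$ and differentiating under the integral sign (licensed by the same theorem), I would obtain the Duhamel-type identity for $s^j$, whose differentiation in $t$ gives the ODE in (\ref{ps}). The chain rule produces two terms, because $f$ depends on $v_j$ implicitly through $x^i$ and explicitly through $u^{\iota}$, namely $\frac{\partial f}{\partial x}s^j$ and $\frac{\partial f}{\partial u}\frac{\partial u^{\iota}}{\partial v_j}$. The only computation needing care is $\partial u^{\iota}/\partial v_j$: since $v_j$ is the $l$-th component of the block $v^{k}$ with $k=[\frac{j}{n_u}]+1$ and $l=j\bmod n_u$, differentiating $\sum_{k'}v^{k'}\chi_{I_{k'}}$ annihilates every block except the $k$-th and yields exactly $E_l\chi_{I_k}(t)$, the inhomogeneous forcing in (\ref{ps}). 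The initial condition $s^j(0)=0$ follows because $x^i(0)=x^0$ is independent of $v$.

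The remaining items are routine. Uniqueness of $s^j$ is immediate: (\ref{ps}) is linear in $s^j$ with coefficient matrix $\frac{\partial f}{\partial x}(x^i,v,p^i)$ bounded along the bounded trajectory and with a bounded measurable forcing term, so Gronwall's inequality forces the absolutely continuous solution to be unique. The gradient formula then follows from one more chain rule at $t=t_f$, giving $\nabla_v h(x^{v,i}(t_f))=\frac{\partial h}{\partial x}(x^{v,i}(t_f))\,\frac{\partial x^{v,i}(t_f)}{\partial v}$; collecting the columns $s^j(t_f)$ into the Jacobian $S(t_f)=\partial x^{v,i}(t_f)/\partial v$ reproduces $\frac{\partial h}{\partial x}S(t_f)$ exactly.

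I expect the main obstacle to be the rigorous justification of parameter-differentiability in the first step, which is complicated by the piecewise-constant control rendering the right-hand side of (\ref{DD3}) only piecewise continuous in $t$, i.e.\ a Carath\'eodory rather than a continuous vector field, so the cited smooth-dependence theorem does not apply verbatim on $[0,t_f]$. The clean remedy is to argue interval by interval: on each $I_k$ the control is constant, the field is genuinely $C^1$, and the classical result yields a $C^1$ solution together with the stated variational equation; I would then patch the intervals together using the continuity of $x$ (hence of $s^j$) across the switching nodes $t_k$, observing that only $\dot s^j$ may jump there, which is consistent with the discontinuity of $\chi_{I_k}$ in the forcing term.
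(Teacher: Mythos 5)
The paper states Theorem \ref{para} without proof, deferring to the sensitivity-function literature it cites; your argument is precisely the standard one that underlies it --- differentiate the integral form of (\ref{DD3}) with respect to $v_j$, compute $\partial u^{\iota}/\partial v_j = E_l\chi_{I_k}(t)$, get uniqueness from linearity of (\ref{ps}) plus Gronwall, and apply the chain rule at $t_f$ --- and it is correct, with your interval-by-interval treatment being the right remedy for the Carath\'eodory (piecewise-continuous-in-$t$) vector field. Two small refinements: when you patch across a switching node $t_k$ you need $C^1$ (not merely continuous) dependence of the flow on its initial data, since $x(t_{k-1})$ carries the $v$-dependence into the next subinterval (this is also classical and fits your scheme, e.g.\ before the active block one simply has $s^j\equiv 0$, and after it $s^j$ evolves homogeneously from its value at $t_k$); and your reading of $S(t_f)$ as the Jacobian $\partial x^{v,i}(t_f)/\partial v$ with columns $s^j(t_f)$ is the dimensionally correct interpretation of the paper's stacked-transpose notation for $S$.
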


\subsubsection{Algorithm procedure}

Problem  (Discre-Dual-DROCP) differs from the standard
mathematical programming problems in the sense that it involves
the dynamic system (\ref{DD3}) and the end-point constraints
(\ref{DD2-new}). The dynamic constraint (\ref{DD3}) as well as the
systems of differential equations of the parametric sensitivity
functions are solved by an ordinary differential equation (ODE)
solver in each iteration of the optimization procedure. The
end-point constraints (\ref{DD2-new}) are handled as follows.
Define
\begin{eqnarray}
g_i(x,v,y):=h(x^{v,i}(t_f))-y^{\top} a^i
\end{eqnarray}
and $G_0({x},v,y):=\max\limits_{i}\{0,{g}_i({x} ,v,y)\}$.
Constraints (\ref{DD2-new}) are equivalent to the following
equality constraint:
\begin{eqnarray}
{G}_0({x},v,y)=0.\label{Gc}
\end{eqnarray}
However, $G_0 ({x},v,y)$ is nonsmooth in $(v,y)$. Standard
optimization routines would have difficulties in handling this
type of equality constraints. A widely used  smoothing technique
\cite{Ryan2009} is to approximate ${g} _i$ by
\begin{align}
{g} _i^{\epsilon}({x} ,v,y):=\left\{
\begin{array}{lll}
0,&\mbox{if}~~{g}_i({x} ,v,y)<-\epsilon,\\
\displaystyle\frac{({g} _i({x},v,y)+\epsilon)^2}{4\epsilon}, &\mbox{if}~~-\epsilon\leq {g}_i({x},v,y)\leq \epsilon,\\
{g}_i({x},v,y), &\mbox{if}~~{g}_i({x},v,y)>\epsilon.
\end{array}
\right.
\end{align}

By using the quadratic penalty function, Problem
(Discre-Dual-DROCP) is finally approximated by
\begin{eqnarray}
(\mbox{QP-Dual-DROCP}):&&\inf_{v,y}\,\,\,
\mathcal{J}({x},v,y):=y^{\top}b
+\frac{\varrho}{2}(G_\epsilon({x},v,y))^2 \label{LD1}\\
 &&\ \ \ \ \ \ \dot{x}^i(t)=f(x^i,v,p^i), \ \ \ \ t\in [0,t_f],\ i=1,2,\cdots,m,\label{LD3}\\
 &&\ \ \ \ \ \ x^i(0)=x^0.  \label{LD4}
\end{eqnarray}
where
$G_\epsilon({x},v,y):=\sum\limits_{i=1}^{m}{g}_i^\epsilon({x},v,y)$
and $\varrho$ is  the penalty parameter. The algorithm framework
for the solution of Problem (QP-Dual-DROCP), which is constructed
based on Algorithm 17.4 in \cite{Noceal2006}, is stated as
follows.

\begin{center}
\bf{Algorithm 3.1}
\end{center}
\begin{itemize}
\item Initialize:  \\
Choose an initial point $(v^0,y^0)$. Choose convergence tolerances
$\eta_*$ and $\omega_*$. Choose positive constants
$\bar{\varrho}$, $\alpha_1>1$, $\alpha_2<1$ and $\alpha_3<1$.
Set $\varrho_0=\bar{\varrho}$, $\omega_0=1/\varrho_0, \eta_0=1/\varrho_0^{0.1}, k=0$;  \\
\item Repeat\\
\begin{itemize}
\item[(S1)]For $i=1,2,\cdots, m$, integrate system
(\ref{LD3})-(\ref{LD4}) together with the parametric sensitivity
systems (\ref{ps}) forward in time from 0 to $t_f$. \item[(S2)]
Evaluate the value of the merit function $\mathcal{J}$ and its
gradients, denoted by $\mathcal{J}({x},v^k,y^k)$ and
$\nabla\mathcal{J}({x},v^k,y^k)$, respectively. \item[(S3)] If
$\|P_\mathcal{V}[\nabla\mathcal{J}({x},v^k,y^k)]\|\leq \omega_k$,
where $P_\mathcal{V}d$ is the partial projection of the vector
$d\in R^{n_v+3}$ onto the rectangular box $\mathcal{V}=[v_*,v^*]$
at the current point $(v^k,y^k)$, defined by
\begin{eqnarray}
P_\mathcal{V}d=\left\{\begin{array}{lll}
\min \{0,d_i\},\ \ \mbox{ if } i\leq n_v \mbox{ and } v_i=v_{i*},\\
d_i,\ ~\quad \ \qquad \ \mbox{ if }   i\leq n_v \mbox{ and }v_i\in(v_{i*},v^{i*}),\mbox{ or } i>n_v, \ \ \mbox{ for all } i=1,2,\cdots,n_{v+3},\\
\max\{0,d_i\},\ \ \mbox{ if } i\leq n_v \mbox{ and } v_i=v^{i*}.
\end{array}
\right.
\end{eqnarray}
Then goto (S4-1). Otherwise, goto (S4-2). \item[(S4-1)] If
${G}_\epsilon({x}^k,v^k,y^k)\leq \eta_k$, goto (S5-1); otherwise,
goto (S5-3)

\item[(S4-2)] Using a line search method to find the next point
$(v^{k+1},y^{k+1})$, replace $(v^k,y^k)$ by  $(v^{k+1},y^{k+1})$,
and goto (S1). \item[(S5-1)]---Stopping criterion

If ${G}_\epsilon({x}^k,v^k,y^k)\leq \eta_*$ and
$\|P_\mathcal{V}[\nabla\mathcal{J}({x},v^k,y^k)]\|\leq \omega_*$,
stop. Record the approximate solution $(v^{k},y^k)$ obtained. Otherwise, goto
(S5-3). \item[(S5-2)] ---Tighten tolerance

Set $\eta_{k+1}:=\alpha_3\eta_k$ and goto (S5-3).
\item[(S5-3)]---Increase penalty parameter

 Set  $\varrho_{k+1}:=\alpha_1 \varrho_{k}$,
$\omega_{k+1}:=\alpha_2\omega_{k+1}$, $k:=k+1$, and goto (S1).
\end{itemize}

\end{itemize}
\begin{remark}
Note that since ${x} $ is an intermediate variable depending on
$v$ and $p_i$ rather than an independent variable, the merit
function $\mathcal{J}$ can, in essence, be regarded as a function
of $v$ and $y$. Thus, the gradient of $\mathcal{J}$ only composes
of the partial derivatives of $\mathcal{J}$ with respect to $v$
and $y$; it is a vector of dimension $n_v+3$.
\end{remark}

\section{The case of continuous distributions}

For the case of continuous distributions, the cost function
$h(x(t_f;u,p))$ can be considered as a function of $u$ and $p$,
because the state $x$ is only an intermediate variable depending on $u$ and $p$. For a fixed
$u$, the inner sub-problem is given as follows.
 \begin{eqnarray}
(\mbox{CISP})&&\sup_{F} \ \ \ \ \  \int_{\mathcal{F}}h(x(t_f;u,p))dF(p)\nonumber\\
&&\mbox{s.t.} \ \ \ \ \ \int_{\mathcal{F}}dF(p)=1,\label{sup-p1}\\
&&\ \ \ \ \ \ \  \ \ \int_{\mathcal{F}}pdF(p)=\mu,\label{sup-p2}\\
&&\ \ \ \ \ \ \ \  \ \int_{\mathcal{F}}p^2dF(p)=\mu^2+\sigma^2,\label{sup-p3}\\
&&\ \ \ \ \ \ \   \ \ dF(p)\geq0.\label{sup-p4}
\end{eqnarray}

To extent the results obtained for the case of discrete
distributions detailed in the previous section to the case of
continuous distributions, we propose a scheme for the
discretization of the continuous stochastic variable
 based on the control parametrization method.

Suppose that the uncertain parameter $p$ is disturbed in an
interval $[p_l,p_u]$.  Let $\psi:[p_l,p_u]\rightarrow[0,\infty)$
be an element of $\mathcal{F}(\mu,\sigma)$, i.e., $\psi$ is a
potential probability density function of $p$ satisfying
(\ref{sup-p2}) and (\ref{sup-p3}).  Let
$p_l=p_0<p_1<p_2<\cdots<p_m=p_u$ be a set of time points on the interval $[p_l,p_u]$.
Denote $[p_{i-1},p_i)$ by $I_i^p$,
$i=1,2,\cdots,m-1$, and $[p_{m-1},p_m]$ by $I_m^p$. Let $\Delta
p_i:=p_i-p_{i-1}$,  and let
\begin{eqnarray}\label{dp}
\Delta p:=\max\limits_{i} \Delta p_i.
\end{eqnarray}
Let $p_d^i$ be an arbitrarily but fixed element chosen from $[p_{i-1},p_i)$.
It is referred to as a characteristic element of this
subinterval. When the uncertain parameter takes values in
$[p_{i-1},p_i)$, it is approximated as $p_d^i$ in the
system. As a result, the uncertain parameter interval $[p_l,p_u]$ is
approximated as a finite set $\{p_d^i\}_{i=1}^m$. Moreover, the
probability $\mathbb{P}(p=p_d^i)$ is defined as
\begin{eqnarray}\label{pmf}
\mathbb{P}(p=p_d^i)=\int_{p_{i-1}}^{p_i}\psi(p)dp:=q_d^i, \ \ i=1,2,\cdots,m.
\end{eqnarray}

On the discretization of the continuous distribution, the cost function is approximated as follows:
\begin{eqnarray}\label{gd}
\int_{\mathcal{F}}h(x(t_f;u,p))dF(p)\thickapprox\sum\limits_{i=1}^m q_d^ih(x^i(t_f;u,p_d^i)).
\end{eqnarray}
  The same idea can be used for the constraints. Thus, we can  approximate the inner
  sub-problem (CISP) by the following discrete-distribution problem
\begin{eqnarray}
(\mbox{DISP})&&\sup_{F} \ \ \ \ \  \sum\limits_{i=1}^m q_d^ih(x^i(t_f;u,p_d^i))\nonumber\\
&&\mbox{s.t.} \ \ \ \  \sum\limits_{i=1}^m q_d^i=1,\label{sup-q1}\\
&&\ \ \ \ \ \ \  \ \ \sum\limits_{i=1}^m p_d^i q_d^i=\mu,\label{sup-q2}\\
&&\ \ \ \ \ \ \ \  \ \sum\limits_{i=1}^m (p_d^{i})^2 q_d^i=\mu^2+\sigma^2,\label{sup-q3}\\
&&\ \ \ \ \ \ \   \ \ q_d^i\geq0.\label{sup-q4}
\end{eqnarray}

Note that Problem (DISP) is the same as Problem (ISP) detailed in
the previous section. That is, it is a distributionally robust
optimal control problem with discrete distribution. If the above
discretization method is convergent, the solution of Problem
(DROCP) with continuous distribution can be approximately obtained
through solving a sequence of problems with discrete
distributions.
 Therefore, we only need to verify the convergence of the
 above discretization scheme, which  will be proved by
 investigating the relationships of the cost function and
 constraint functions between Problem (CISP) and Problem (DISP).

From (\ref{pmf}), it is obvious that constraints (\ref{sup-p1})
and (\ref{sup-q1}) are consistent. Besides,  inequality
(\ref{sup-p4}) in Problem (CISP) also implies  inequality
(\ref{sup-q4}) in Problem (DISP). Therefore, we only need to
evaluate the differences of the cost functions and the two
constraints between Problem (CISP) and Problem (DISP). Details are
given in the following two theorems.
\begin{theorem} \label{th-x}
Given $u\in\mathcal{U}$ and any $p\in I_p^i$, let $x(\cdot;p)$ and $x^i(\cdot;p_d^i)$ be,
respectively, the solution of
\begin{eqnarray*}
\left\{
\begin{array}{lll}
\dot{x}=f(x,u,p),\\
x(0)=x^0,
\end{array}
\right.\ \ t\in[0,t_f]
\end{eqnarray*}
and the solution of
\begin{eqnarray}
\left\{
\begin{array}{lll}
\dot{x}=f(x,u,p_d^i),\\
x(0)=x^0,
\end{array}
\right.\ \ t\in[0,t_f].
\end{eqnarray}
Then, there exists a constant $L_1>0$, which is independent of $p$ and $p_d^i$, such that the inequality
\begin{eqnarray*}
\|x(t;p)-x^i(t;p_d^i)\|\leq L_1\Delta p,
\end{eqnarray*}
holds for all $t\in[0,t_f]$ with $\Delta p$ defined in (\ref{dp}).
\end{theorem}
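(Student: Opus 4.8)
The plan is to establish this as a standard continuous-dependence estimate for the ODE (\ref{s1}), obtained by combining a uniform a priori bound on the trajectories with Gronwall's inequality. First I would write both trajectories in their equivalent integral form,
\begin{equation*}
x(t;p)=x^0+\int_0^t f(x(s;p),u(s),p)\,ds,\qquad x^i(t;p_d^i)=x^0+\int_0^t f(x^i(s;p_d^i),u(s),p_d^i)\,ds,
\end{equation*}
so that the difference can be controlled through the integrand.

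The next step is to exhibit a single compact set containing every trajectory that occurs. Using the linear growth bound (A2) in the integral form and applying Gronwall's inequality gives $\|x(t;p)\|\le(\|x^0\|+Ct_f)e^{Lt_f}$ for each fixed $p$. Since $p$ ranges over the compact interval $[p_l,p_u]$, $u(t)\in U$ with $U$ compact, and $f$ is continuous by (A1), the growth constants can be chosen uniformly over $[p_l,p_u]$; I denote the resulting uniform bound by $R$ and set $\mathcal{B}=\{x:\|x\|\le R\}$. Then $x(t;p),x^i(t;p_d^i)\in\mathcal{B}$ for all $t\in[0,t_f]$ and all admissible $p,p_d^i$.

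On the compact set $\mathcal{B}\times U\times[p_l,p_u]$, the $C^1$ regularity from (A1) yields bounded partial derivatives, hence uniform Lipschitz constants $K_x,K_p>0$ with $\|f(x,u,p)-f(\bar x,u,p)\|\le K_x\|x-\bar x\|$ and $\|f(x,u,p)-f(x,u,\bar p)\|\le K_p|p-\bar p|$ for all $x,\bar x\in\mathcal{B}$, $u\in U$, $p,\bar p\in[p_l,p_u]$. Subtracting the two integral equations, inserting the intermediate term $f(x^i(s;p_d^i),u(s),p)$, and taking norms, I would obtain
\begin{equation*}
\|x(t;p)-x^i(t;p_d^i)\|\le K_x\int_0^t\|x(s;p)-x^i(s;p_d^i)\|\,ds+K_p\,t_f\,|p-p_d^i|.
\end{equation*}
Since $p,p_d^i\in I_p^i$ forces $|p-p_d^i|\le\Delta p_i\le\Delta p$, the forcing term is bounded by $K_p t_f\Delta p$. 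Applying Gronwall's inequality then gives $\|x(t;p)-x^i(t;p_d^i)\|\le K_p t_f e^{K_x t_f}\Delta p$, so the claim holds with $L_1:=K_p t_f e^{K_x t_f}$.

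The routine splitting and the final Gronwall step are mechanical; the genuine obstacle is \emph{uniformity}. Assumption (A2) is stated only for each fixed $p$, so a priori both the trajectory bound $R$ and the Lipschitz constants $K_x,K_p$ could depend on $p$. The crux of the argument is therefore the compactness reduction: using continuity and $C^1$-ness of $f$ on the compact parameter range $[p_l,p_u]$ (and compact $U$) to pass from $p$-dependent constants to uniform ones. This is precisely what guarantees that the resulting $L_1$ is independent of $p$ and $p_d^i$, as required by the statement.
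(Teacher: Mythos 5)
Your proof is correct, and it takes a genuinely different --- and more careful --- route than the paper's own argument. The paper's proof writes both solutions in integral form and then bounds $\int_0^t\|f(x,u,p)-f(x,u,p_d^i)\|\,dt$ directly by the Lipschitz-in-$p$ estimate $\|f(x,u,p)-f(x,u,p')\|\le L_1|p-p'|$; but in that step the single symbol $x$ stands for two \emph{different} trajectories ($x(\cdot;p)$ in the first integrand, $x^i(\cdot;p_d^i)$ in the second), whereas the Lipschitz-in-$p$ bound requires identical state arguments. The term $f(x(s;p),u,p)-f(x^i(s;p_d^i),u,p)$, which measures how the two trajectories drift apart, is silently dropped, and no Gronwall step appears; the paper's final line also loses a factor of $t$ (the computation as set up yields $L_1 t\,|p-p_d^i|$, not $L_1|p-p_d^i|$, though this only changes the constant). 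Your decomposition through the intermediate term $f(x^i(s;p_d^i),u(s),p)$, followed by Gronwall, supplies exactly the missing piece and yields the stated estimate with $L_1:=K_p t_f e^{K_x t_f}$. You also correctly identify and handle a uniformity issue the paper glosses over: (A1) gives $C^1$ regularity on a noncompact domain, which does not by itself give a Lipschitz constant valid along all trajectories, and (A2) is stated per fixed $p$; your a priori bound $R$ and the restriction of $f$ to the compact set $\mathcal{B}\times U\times[p_l,p_u]$ are what make $L_1$ genuinely independent of $p$ and $p_d^i$, as the theorem asserts. In short, the paper buys brevity at the price of a real gap, while your argument is the standard continuous-dependence proof and is the one that actually establishes the claim; the only point worth one more sentence in your write-up is the assertion that the growth constants in (A2) can be chosen uniformly over $p\in[p_l,p_u]$, which needs either a short continuity-plus-compactness justification or a strengthened reading of (A2).
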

\begin{proof}
Given $u\in\mathcal{U}$ and any $p\in I_p^i$, the solution of
system (\ref{s1}) can be expressed as
$$x(t;p)=\int_0^{t}f(x,u,p)dt, \ \ \ \forall t\in[0,t_f].$$
It follows that
\begin{eqnarray}
\|x(t;p)-x^i(t;p_d^i)\|&=&\|\int_0^t f(x,u,p)dt-\int_0^t f(x,u,p_d^i)dt\|\nonumber
\\
&\leq&\int_0^t\| f(x,u,p)- f(x,u,p_d^i)\|dt.
\end{eqnarray}
Since $f$ is at least continuously differentiable with respect to
$p$, it also satisfies Lipschitz condition in $p$ on $[p_l,p_u]$, that is, there
exists a constant $L_1$ such that
\begin{eqnarray}
\| f(x,u,p)- f(x,u,p')\|\leq L_1|p-p'|, \  \ \forall p,p'\in[p_l,p_u].
\end{eqnarray}
Therefore, we have
\begin{eqnarray}
\|x(t;p)-x^i(t;p_d^i)\|\leq L_1|p-p_d^i|\leq L_1 \Delta p, \ \ \forall t\in[0,t_f]\nonumber
\end{eqnarray}
where $\Delta p$ is defined in (\ref{dp}). This complete the proof.
\end{proof}

\begin{theorem}\label{th-conv}
Let $\psi(p)$ be a probability density function satisfying
(\ref{sup-p2}) and (\ref{sup-p3}) and let $F$ be the corresponding
distribution function. Then, for any control $u\in\mathcal{U}$ and
$\epsilon>0$, there exists $\delta>0$  such that the following
inequalities
\begin{itemize}
\item[(a)]
$\Big|\displaystyle\int_\mathcal{F}pdF(p)-\displaystyle\sum\limits_{i=1}^m
p_d^i\cdot q_d^i\Big|=\Big|\displaystyle\int_{p_l}^{p_u}
p\psi(p)dp-\displaystyle\sum\limits_{i=1}^m p_d^i\cdot
q_d^i\Big|\leq\epsilon;$ \item[(b)]
$\Big|\displaystyle\int_\mathcal{F}p^2dF(p)-\displaystyle\sum\limits_{i=1}^m(p_d^i)^2\cdot
q_d^i\Big|=\Big|\displaystyle\int_{p_l}^{p_u}
p^2\psi(p)dp-\displaystyle\sum\limits_{i=1}^m (p_d^i)^2\cdot
q_d^i\Big|\leq\epsilon;$ \item[(c)]
$\Big|\displaystyle\int_\mathcal{F}
h(x(t_f;u,p))dF(p)-\displaystyle\sum\limits_{i=1}^m q_d^i
h(x^i(t_f;u,p_d^i)) \Big|\leq \epsilon.$
\end{itemize}
hold provided the  grid size $\Delta p\leq\delta$.
\end{theorem}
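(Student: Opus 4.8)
The plan is to treat all three estimates in a unified way, exploiting the fact that the discrete probabilities are themselves integrals of $\psi$ over the subintervals. The starting observation is that, by the definition $q_d^i=\int_{p_{i-1}}^{p_i}\psi(p)\,dp$ in (\ref{pmf}), each discrete sum can be rewritten as a sum of integrals over the same partition used for the continuous integral. Concretely, for part (a) I would write $\sum_{i=1}^m p_d^i q_d^i=\sum_{i=1}^m\int_{p_{i-1}}^{p_i} p_d^i\,\psi(p)\,dp$ and $\int_{p_l}^{p_u}p\,\psi(p)\,dp=\sum_{i=1}^m\int_{p_{i-1}}^{p_i}p\,\psi(p)\,dp$, so that the difference collapses to $\sum_{i=1}^m\int_{p_{i-1}}^{p_i}(p-p_d^i)\,\psi(p)\,dp$. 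Since $p$ and $p_d^i$ both lie in $I_i^p$, the integrand is bounded in absolute value by $|p-p_d^i|\le\Delta p_i\le\Delta p$, and because $\psi\ge0$ with $\sum_{i}\int_{p_{i-1}}^{p_i}\psi=1$, this yields the global bound $\Delta p$. Choosing $\delta=\epsilon$ settles (a).

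Part (b) follows the identical template with integrand $p^2-(p_d^i)^2$. Factoring $p^2-(p_d^i)^2=(p+p_d^i)(p-p_d^i)$ and using $|p|,|p_d^i|\le\max\{|p_l|,|p_u|\}=:R$ on $[p_l,p_u]$, I obtain the pointwise bound $|p^2-(p_d^i)^2|\le 2R\,\Delta p$. The same normalization $\sum_{i}\int_{p_{i-1}}^{p_i}\psi=1$ then gives the global bound $2R\,\Delta p$, so $\delta=\epsilon/(2R)$ suffices for (b).

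For part (c), since $\psi$ is the density of $F$ the continuous objective is $\int_{p_l}^{p_u} h(x(t_f;u,p))\psi(p)\,dp$, and the scheme is again to write the difference as $\sum_{i=1}^m\int_{p_{i-1}}^{p_i}\big[h(x(t_f;u,p))-h(x^i(t_f;u,p_d^i))\big]\psi(p)\,dp$, reducing everything to a pointwise estimate of the bracketed term. This is where the real work lies, and it rests on two ingredients. First, by Theorem~\ref{th-x}, the state discrepancy satisfies $\|x(t_f;u,p)-x^i(t_f;u,p_d^i)\|\le L_1\Delta p$ uniformly in $p\in I_i^p$. Second, I need $h$ to be Lipschitz on the relevant range of states: invoking the growth condition (A2) together with Gronwall's inequality shows that all trajectories $x(\cdot;u,p)$, for $u\in\mathcal{U}$ and $p\in[p_l,p_u]$, remain within a fixed compact ball $B\subset R^{n_x}$; since $h$ is continuously differentiable by (A1), it is Lipschitz on $B$ with some constant $L_h$. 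Combining these gives $|h(x(t_f;u,p))-h(x^i(t_f;u,p_d^i))|\le L_h L_1\Delta p$, whence the whole difference in (c) is at most $L_h L_1\Delta p$, and $\delta=\epsilon/(L_h L_1)$ closes the argument.

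The main obstacle is precisely the Lipschitz bound on $h$ in part (c): $h$ is only assumed $C^1$, not globally Lipschitz, so the estimate cannot be applied directly on $R^{n_x}$. The crucial preparatory step is therefore the a priori confinement of all relevant trajectories to a single compact set, uniformly over the admissible controls and over the parameter interval, which is exactly what (A2) and Gronwall's inequality provide; only after this confinement can the $C^1$ hypothesis be upgraded to a usable Lipschitz estimate. Taking $\delta$ to be the minimum of the three thresholds obtained above then yields a single $\delta$ that works for (a), (b) and (c) simultaneously.
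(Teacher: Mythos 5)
Your proposal is correct, and for parts (a) and (b) it coincides with the paper's proof step for step: the same rewriting of $q_d^i$ via (\ref{pmf}), the same collapse of each difference to $\sum_{i=1}^m\int_{p_{i-1}}^{p_i}(\cdot)\,\psi(p)\,dp$, and the same pointwise bounds of order $\Delta p$ (your constant $2R$ with $R=\max\{|p_l|,|p_u|\}$ is in fact safer than the paper's $2p_u$, which tacitly assumes $p_u$ bounds $|p|$ on the whole interval). The genuine divergence is in part (c). The paper argues qualitatively: since $h$ is $C^1$, for every $\epsilon$ there is a $\delta_1$ with $|h(x)-h(x')|\le\epsilon$ whenever $\|x-x'\|\le\delta_1$, and Theorem~\ref{th-x} then supplies $\|x(t_f;u,p)-x(t_f;u,p_d^i)\|\le\delta_1$ once $\Delta p\le\delta_1/L_1$. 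You instead first confine all trajectories to a fixed compact ball via (A2) and Gronwall, upgrade the $C^1$ hypothesis to a Lipschitz constant $L_h$ on that ball, and obtain the quantitative bound $L_hL_1\Delta p$. Your route buys two things the paper's does not: first, rigor --- the paper's $\epsilon$--$\delta_1$ statement is really a \emph{uniform} continuity claim, which continuity of $h$ on all of $R^{n_x}$ does not provide, so the compact confinement you make explicit is silently needed in the paper's argument as well; second, an explicit linear rate in $\Delta p$ rather than a bare smallness statement. You also aggregate by taking $\delta$ as the \emph{minimum} of the three thresholds, which is the logically correct choice; the paper writes $\delta:=\max\{\epsilon,\epsilon/(2p_u),\delta_1/L_1\}$, an evident slip, since the maximum would not guarantee all three inequalities simultaneously. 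One caveat worth a sentence in your write-up: assumption (A2) fixes $L$ and $C$ for each $p$ separately, so uniformity of the Gronwall bound over $p\in[p_l,p_u]$ deserves justification (compactness of $[p_l,p_u]$ together with continuity of $f$ supplies it); this gap, however, is shared with the paper and does not affect the validity of your argument.
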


\begin{proof}
(a) The equality holds directly from the definition.
Thus, it remains to prove the validity of the inequality. From (\ref{pmf}), we have
\begin{eqnarray*}
\Big|\displaystyle\int_{p_l}^{p_u} p\psi(p)dp-\displaystyle\sum\limits_{i=1}^m p_d^i\cdot q_d^i\Big|&=&
\Big|\displaystyle\int_{p_l}^{p_u} p\psi(p)dp-\displaystyle\sum\limits_{i=1}^m \ p_d^i\int_{p_{i-1}}^{p_i}\psi(p)dp\Big|\\
&=&\Big|\displaystyle\int_{p_l}^{p_u} p\psi(p)dp-\displaystyle\sum\limits_{i=1}^m  \int_{p_{i-1}}^{p_i}p_d^i\psi(p)dp\Big|\\
&=&\Big|\displaystyle\sum\limits_{i=1}^m\int_{p_{i-1}}^{p_i} p\psi(p)dp-\displaystyle\sum\limits_{i=1}^m  \int_{p_{i-1}}^{p_i}p_d^i\psi(p)dp\Big|\\
&\leq&\sum\limits_{i=1}^m\int_{p_{i-1}}^{p_i}\Big|p-p_d^i\Big|\psi(p)dp\leq \sum\limits_{i=1}^m \Delta p\int_{p_{i-1}}^{p_i}\psi(p)dp=\Delta p
\end{eqnarray*}

(b) Similar to the derivation given in (a), we obtain
\begin{eqnarray*}
\Big|\displaystyle\int_{p_l}^{p_u} p^2\psi(p)dp-\displaystyle\sum\limits_{i=1}^m (p_d^i)^2\cdot q_d^i\Big|\leq\sum\limits_{i=1}^m\int_{p_{i-1}}^{p_i}\Big|p^2-(p_d^i)^2\Big|\psi(p)dp\leq 2p_u\Delta p\sum\limits_{i=1}^m \int_{p_{i-1}}^{p_i}\psi(p)dp=2p_u\Delta p
\end{eqnarray*}

(c)  Similarly, we have \begin{eqnarray}
&&\Big|\displaystyle\int_{p_l}^{p_u} h(x(t_f;u,p))\psi(p)dp-\displaystyle\sum\limits_{i=1}^m q_d^i h(x^i(t_f;u,p_d^i))\Big|\nonumber\\
=&&
\Big|\displaystyle\sum\limits_{i=1}^m \ \int_{p_{i-1}}^{p_i} \Big[h(x(t_f;u,p))-h(x^i(t_f;u,p_d^i))\Big]\psi(p)dp\Big|\nonumber\\
\leq&& \displaystyle\sum\limits_{i=1}^m \ \int_{p_{i-1}}^{p_i}\Big|h(x(t_f;u,p))-h(x^i(t_f;u,p_d^i))\Big|\psi(p)dp\label{c-1}
\end{eqnarray}
Since $h$ is continuously differentiable in $x$, there exists, for any $\epsilon$, a $\delta_1>0$ such that
\begin{eqnarray}\label{h-ineq}
\Big|h(x)-h(x')\Big|\leq \epsilon, \ \ \ \mbox{if} \ \ \|x-x'\|\leq\delta_1.
\end{eqnarray}
From Theorem \ref{th-x}, it follows that the following inequality
\begin{eqnarray}\label{x-ineq}
\|x(t_f;u,p)-x(t_f;u,p_d^i)\|\leq \delta_1, \forall p\in[p_{i-1},p_i)
\end{eqnarray}
holds if $\Delta p\leq\displaystyle\frac{\delta_1}{L_1}$.
Substitute (\ref{h-ineq}) and (\ref{x-ineq}) into  (\ref{c-1}). If
 \begin{align}
\Delta p\leq \frac{\delta_1}{L_1},\nonumber
\end{align}
then
 \begin{eqnarray}
\Big|\displaystyle\int_{p_l}^{p_u} h(x(t_f;u,p))\psi(p)dp-\displaystyle\sum\limits_{i=1}^m q_d^i h(x^i(t_f;u,p_d^i))\Big|\leq\sum\limits_{i=1}^m\int_{p_{i-1}}^{p_i}\epsilon \psi(p)dp=\epsilon.\nonumber
\end{eqnarray}
Let
$\delta:=\max\{\epsilon,\displaystyle\frac{\epsilon}{2p_u},\frac{\delta_1}{L_1}\}$.
Then, we conclude that  inequalities (a), (b), (c) hold if $\Delta
p\leq\delta$. Hence, the proof is completed.
\end{proof}
\begin{remark}
In Theorem \ref{th-conv}, the relationships of the cost functions
and the constraints between Problem (DROCP)   and its
approximation problem with discrete distributions are given.  Note
that Theorem \ref{th-conv}
 is not related to the issue of local or global
optima. It holds for all controls and all feasible probability
density functions, and hence also holds for global and local
optima.
\end{remark}

\section{Illustration example}
 In this section, we choose an example  to
 illustrate the application of distributionally robust optimal
 control model and to test the performance of the proposed algorithm.
 The illustration example is a distributionally robust optimal control of a microbial
 fed-batch process \cite{Ye2014}, which is stated as follows.

Let $X$ be the concentration of biomass (g/L), $S$ be the
concentration of substrate (g/L) and $V$ be the volume of the
solution (L). The control system of the fed-batch process is
described by
\begin{eqnarray}
&&\dot{X}=(\mu_X-d_X)X,\\
&&\dot{S}=-q_SX+\frac{\rho_S-S}{V}u(t), \\
&&\dot{V}=u(t),
\end{eqnarray}
where $u(t)$ is the input control of the substrate. $d_X$ is the
specific decay rate of cells, and $\rho_S$ is the concentration of
substrate in feed medium. $\mu_X$ is the specific growth rate of
biomass, and $q_S$ is the specific consumption rate of substrate,
which are, respectively, expressed as
\begin{eqnarray}
&&\mu_X=\mu_m\frac{S}{S+K_S}(1-\frac{S}{S^*}),\label{eq-mu}\\
&&q_S=m_S+\frac{\mu_X}{Y_S}.\label{eq-qs}
\end{eqnarray}
In (\ref{eq-mu}), $\mu_m$ is the maximum specific growth rate,
$K_S$ is the saturation constant, and $S^*$ is the critical
concentration of the substrate
 above which cells cease to grow.  In (\ref{eq-mu}), $m_S$ and
 $Y_S$ are, respectively,
  the maintenance requirement of substrate and the maximum growth
  yield. The above system is a typical kinetic model used in microbial
  fermentation process, see, e.g., Ye et al. \cite{Ye2014} and Zeng et al.
  \cite{Zeng1995}. In general, $m_S$ is regarded to be
  constant during the whole fermentation process. However, it is
  well-known that the maintenance consumption of substrate would
  vary during different fermentation stages. Thus, we consider
  $m_S$ as an uncertain parameter in this work.
Assume that the mean and the standard deviation of the uncertain
parameter
 are $m_\mu$ and $m_\sigma$, respectively.

The problem is to control the input $u(t)$ such that the biomass
at the terminal time $t_f$ is maximized. For convenience of
presentation, let $x:=[x_1,x_2,x_3]^\top=[X,S,V]^{\top}$. Define
the admissible set of controls as $\mathcal{U}:=\{u(t)|u_*\leq
u(t)\leq u^*\}$ with $u_*$ and $u^*$ being the minimum and maximum
input rates. Define
\begin{eqnarray}
f(x,u,m_S):=[(\mu_X-d_X)X,-q_SX+\frac{\rho_S-S}{V}u(t),u(t)]^{\top}.\nonumber
\end{eqnarray}
By transforming the time interval [0,$t_f$] into $[0,1]$, the
optimal control problem can be stated as follows:
\begin{eqnarray}
(\mbox{
OCP}):&&\min_{u}\max_F\,\,\, \mathbb{E}_Fh(x(1;u,m_S)):=-x_1(1;u,m_S)\nonumber\\
 && \ \mbox{s.t.} \ \ \dot{x}(t)=t_f f(x,u,m_S), \ \ \ \ t\in [0,1],\ x(0)=x^0,\label{ocp-s1}\\
 &&\ \ \ \ \ \ \ m_S\sim F\in \mathcal{F}(m_\mu,m_\sigma^2)=\{F:\mathbb{E}_{F}(p)=m_\mu,
 \mathbb{E}_{F}(m_S-m_\mu)^2=m_\sigma^2\} ,
\label{ocp-pc}\\
&&\ \ \ \ \ \ \ u\in\mathcal{U}.  \label{ocp2}
\end{eqnarray}

In this numerical example, we set $x^0=[0.1,20,3]^\top$,
$m_\mu=2.2$, $m_\sigma=0.2$, $m_S\in[0.8m_\mu,1.2m_\mu]$,
$d_X=0.05$, $\mu_m=2.7$, $K_S=280$, $Y_S=0.082$, $\rho_S=945$,
$u_*=0$, $u^*=0.04$, $t_f=25$h.

In Algorithm 3.1, even if Problem (Dual-DROCP) is linear with
respect to $y$, we still optimize $y$ together with $u$ by using
the nonlinear optimization techniques. In the numerical
experiments, the time horizon is equidistantly divided into 25
subintervals for the parameterization of the control $u$. Since the
microbial fermentation is a relatively slow time-varying process,
the partition is adequate. In the discretization of the
continuous distribution of the uncertain parameter $m_S$, we
choose ten characteristic elements $\{m_S^i\}_{i=1}^{10}$  over
$[0.8m_\mu,1.2m_\mu]$, where
$m_S^i=0.8m_\mu+\displaystyle\frac{i-1}{9}0.4m_\mu$,
$i=1,2,\cdots,10$.

A good initial guess of the decision variables is important to
help ensure the convergence of the algorithm. We use the following
procedure to generate an initial guess:
randomly generate a control $u$,  and for the fixed $u$, the
variable $y$ is optimized by a linear programming solver and the
performance of (QP-Dual-DROCP) is computed; the process is repeated
$M$(=200) times and the pair $(u,y)$ with the best performance is
set to be an initial guess for a run of Algorithm 3.1. It is worth
mentioning that the generated initial guess, if exists, is a
feasible solution of Problem (Dual-DROCP). Starting from the
initial guess, the proposed algorithm, which is encoded in Matlab
7.0, is implemented on an Intel dual-core i5 with 2450GHz. The
height of the optimal control $u^*$ at each subinterval is listed
in Table 1. Under the optimal input strategy and $m_S=m_S^i,
i=1,2,\dots,10$, the trajectories of the biomass and the substrate
are plotted in Figs. 1 and 2.

After obtaining the optimal solution $(u^*,y^*)$ from Algorithm
3.1, we fix the control $u$ at $u^*$ and optimize $y$ again by
solving Problem (Dual-ISP) with linear programming solver. The
optimal solution is denoted by $y_{u^*}^*$. The values of the cost
function at $(u^*,y^*)$ and $(u^*,y_{u^*}^*)$ are denoted as
$\mathcal{J^*}$ and $\tilde{\mathcal{J}}^*$, respectively. We have
$\mathcal{J^*}=-4.0232$ and $\tilde{\mathcal{J}}^*=-4.1217$. The
difference between $\mathcal{J}^*$ and $\tilde{\mathcal{J}}^*$ is
only 0.0985, which reflects the effectiveness of the proposed
algorithm in some degree. On the other hand, we fix the control
$u$ at $u^*$ and optimize $q$ directly by solving Problem (ISP)
with linear programming solver. The optimal solution of $q$ is
$q^*=[ 0, 0, 0, 0.3223,0.5132, 0, 0, 0, 0,
    0.1645]^\top$.  The terminal concentrations of biomass under the characteristic elements
    $\{m_S^i\}_{i=1}^{10}$ are $[4.1605,
    4.1911,
    4.1998,
    4.1891,
    4.1620,
    4.1210,$ $
    4.0686,
    4.0070,
    3.9382,
    3.8637].$

 \begin{table}[!h]\small
\renewcommand{\arraystretch}{2.0}
\centering\caption{The optimal input strategy control.}
\label{table-ps-rs}
\begin{tabular}{|c|c|c|c|c|c|c|c|c|c|c|c}
\hline  $t$  & [0,1] & [1,2]& [2,3] & [3,4] &
[4,5] & [5,6] & [6,7] & [7,8] & [8,9] & [9,10]\\
\hline
 $u$ &0.0124&
    0.0291&
    0.0276&
    0.0093&
    0.0178&
    0.0137&
    0.0021&
    0.0075&
    0.0048&
    0.0106  \\
\hline $t$  & [10,11] & [11,12]& [12,13] & [13,14] &
[14,15] & [15,16] & [16,17] & [17,18] & [18,19] & [19,20]\\
\hline
 $u$&  0.0042&
    0.0127&
    0.0041&
    0.0195&
    0.0167&
    0.0207&
    0.0203&
    0.0286&
    0.0108&
    0.0344
\\
\hline $t$  & [20,21] & [21,22]& [22,23] & [23,24] &
[24,25] &   &   &   &   &  \\
\hline $u$& 0.0343&
    0.0174&
    0.0383&
    0.0332&
    0.0261 &    &   &    &   &
\\
\hline
\end{tabular}
\end{table}

\begin{figure}[h]
\begin{minipage}[h]{0.48\textwidth}
\centering
  \includegraphics[width=1.1\textwidth]{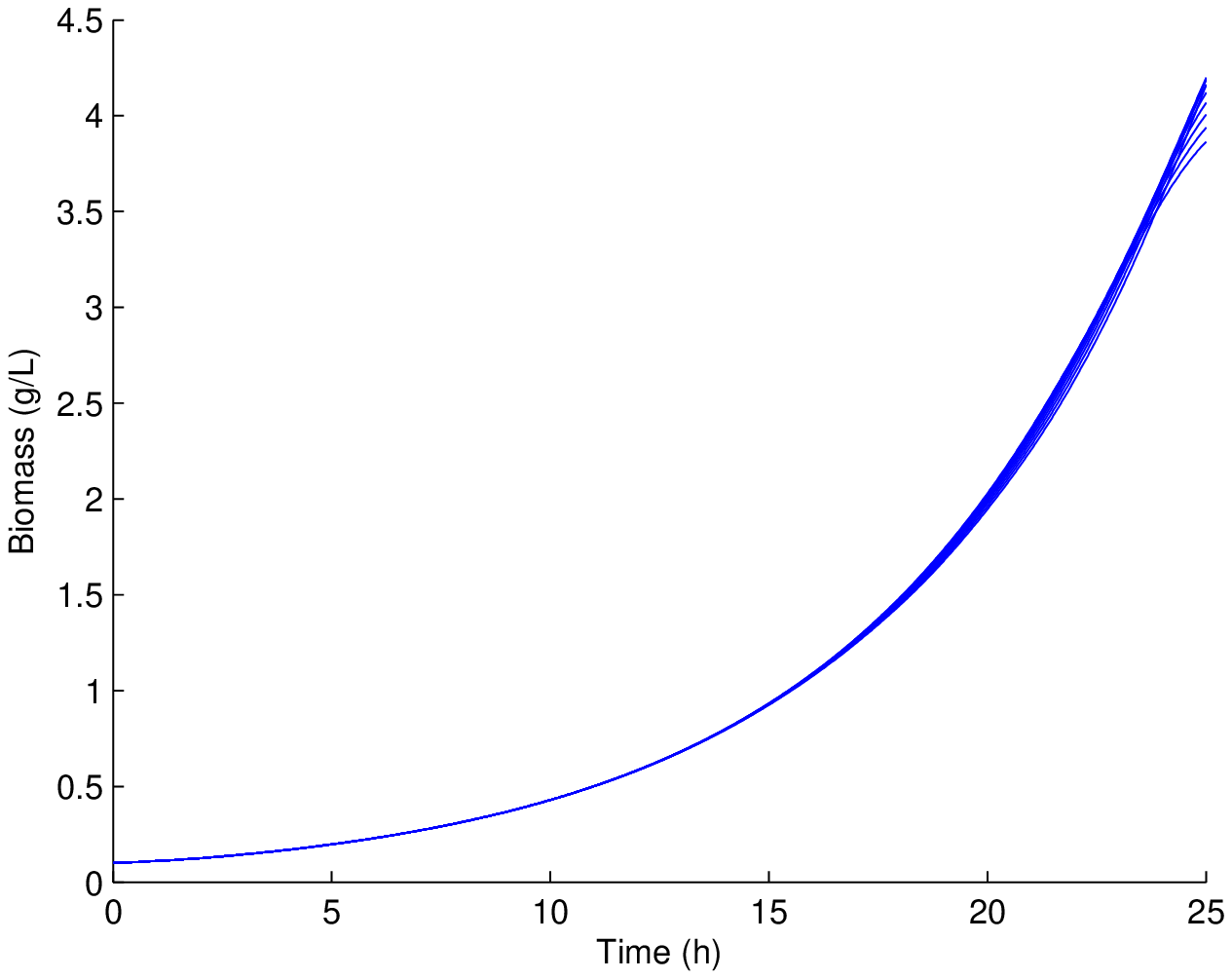}
\caption{The concentrations of biomass under $u=u^*$ and $m_S$
varied from 0.8$m_\mu$ to 1.2$m_\mu$.} \label{figure1}
\end{minipage}
\hfill
\begin{minipage}[h]{0.48\textwidth}
\centering
  \includegraphics[width=1.1\textwidth]{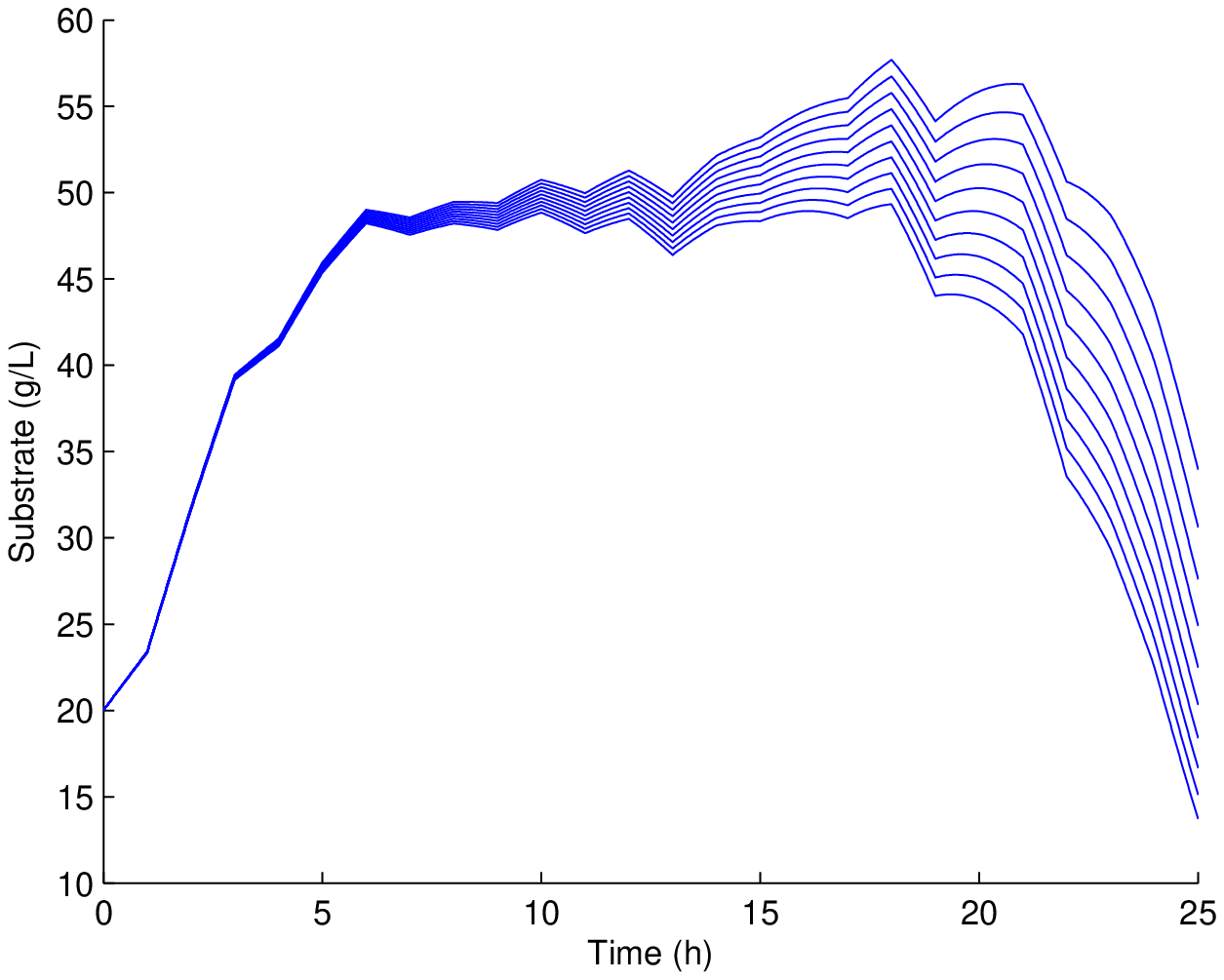}
\caption{The concentrations of substrate under $u=u^*$ and $m_S$
varied from 0.8$m_\mu$ to 1.2$m_\mu$.}\label{figure2}
\end{minipage}
\end{figure}

To illustrate the superiority of the optimal control strategy
obtained from the proposed model, we simulate the system under a
constant input $u(t)\equiv 0.01$. The trajectories of biomass and
substrate with varied $m_S$ under this input strategy are shown in
Figs. 3 and 4.  A comparison of Fig. 1 and Fig. 3 reveals that,
not only the  the terminal concentration of biomass under the
optimal strategy  is significantly higher than that under the
constant control input, but also the variation of biomass
concentration is much smaller than the constant one. This shows that the system under the optimal control strategy could maintain a
good performance even in the ``worst'' case.

\begin{figure}[h]
\begin{minipage}[h]{0.48\textwidth}
\centering
  \includegraphics[width=1.1\textwidth]{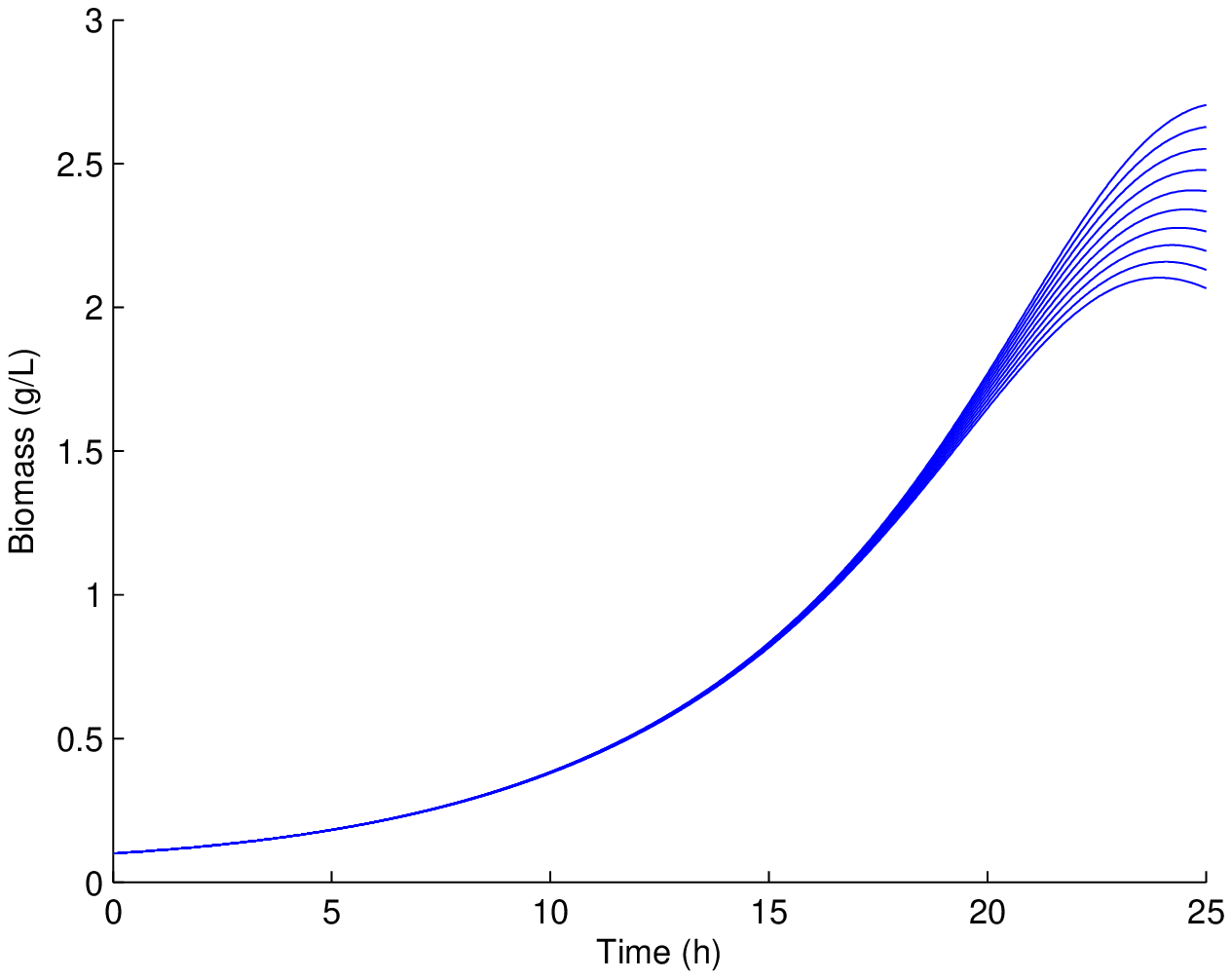}
\caption{The concentrations of biomass under $u=0.01$ and $m_S$
varied from 0.8$m_\mu$ to 1.2$m_\mu$.} \label{figure3}
\end{minipage}
\hfill
\begin{minipage}[h]{0.48\textwidth}
\centering
  \includegraphics[width=1.1\textwidth]{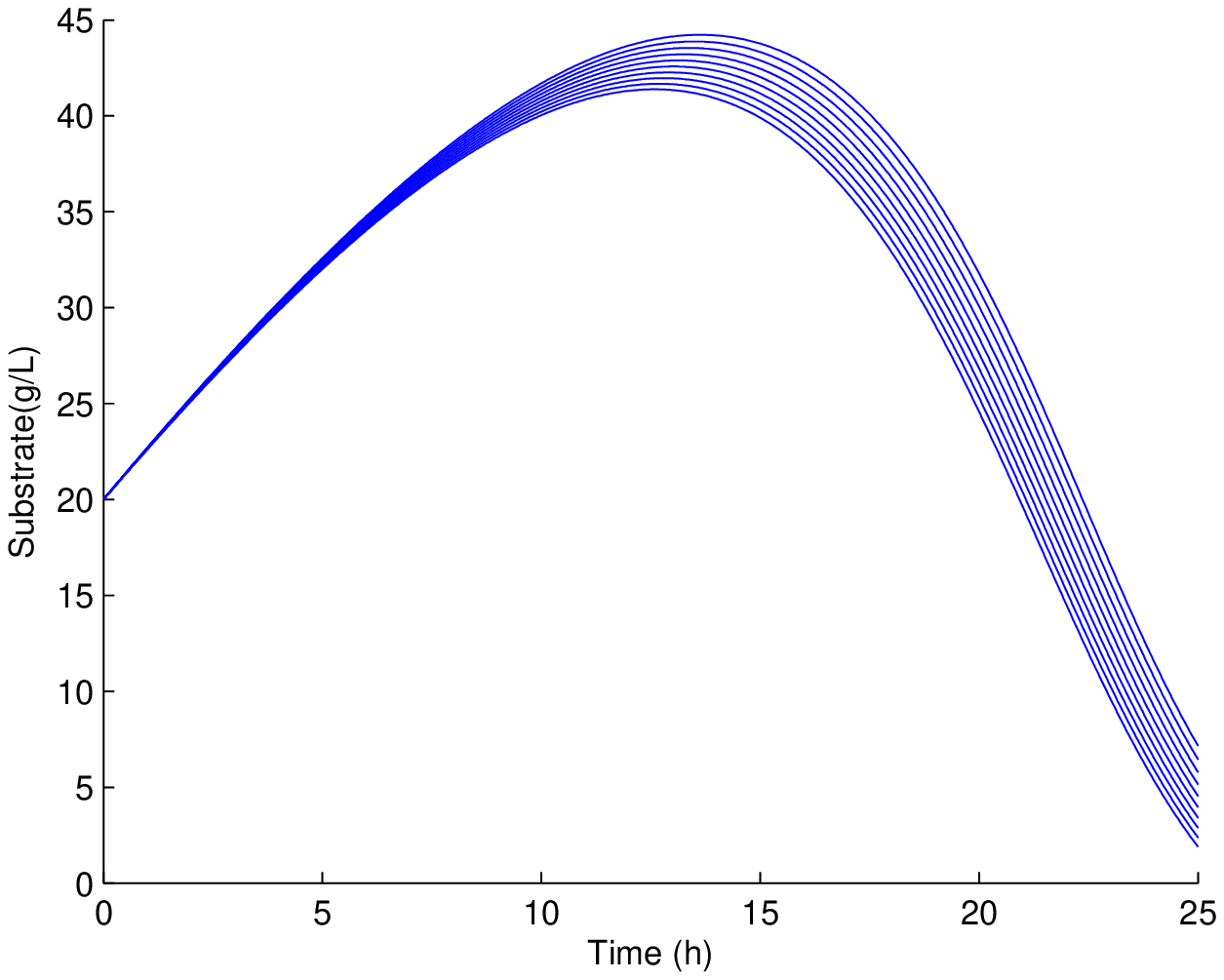}
\caption{The concentrations of substrate under $u=0.01$ and $m_S$
varied from 0.8$m_\mu$ to 1.2$m_\mu$.}\label{figure4}
\end{minipage}
\end{figure}

\section{Conclusion}

This paper introduced an optimal control problem in which both the
objective function and the dynamic constraint contain an uncertain
parameter. Since the distribution of this uncertain parameter is
not exactly known, the objective function is taken as the
worst-case expectation over a set of possible distributions of the
uncertain parameter. To minimize the worst-case expectation over
all possible distributions in an ambiguity set, the stochastic
optimal control problem is converted into a finite-dimensional
optimization problem via duality and discretization. Necessary
conditions of optimality was derived and numerical results for an
illustration example are reported. Numerical results in Section 5
show  the success of the proposed model in producing an optimal
control strategy under which a good performance is achieved. It
also ensures that the variation of the performance is small
subject to the changes in the value of the uncertain parameter.
That is, the system is robust under the optimal control strategy
obtained from the proposed model.

The continuation of this work can be divided into two aspects:
model aspect and algorithm aspect. The  model should take more
factors into account. For example, a further study could be on how
to introduce proper terminal constraints or path constraints into
the model. In the algorithm aspect, the current work transforms
the proposed model into a combined optimal control and optimal
parameter selection problem and solve it by using nonlinear
optimization techniques. However, the special structure of the
problem was not taken into detailed investigation. Problem
(Dual-DROCP) is linear with respect to the optimization vector $y$
but nonlinear with respect the control $u$. An alternative
direction optimization technique could be used to handle these two
kinds of optimization variables separately. For example, the
control $u$ can be fixed first, and the optimal solution $y_u^*$
is easily obtained by solving a linear programming problem. Then,
the control $u$ is regulated by some nonlinear optimization
methods. The procedure is repeated until a satisfactory pair of
$(u,y)$ is found. Some stochastic techniques, such as PSO method,
could also be combined with the alternative direction optimization
technique to regulate the control $u$ in the outer level of the
optimization process.

\section*{Acknowledgements}
This work was supported by the National Natural Science Foundation
for the Youth of China (Grants 11301081, 11401073),
China Postdoctoral Science Foundation (Grant No. 2014M552027), the
Fundamental Research Funds for Central Universities in China
(Grant DUT15LK25), and Provincial National Science Foundation of
Fujian (Grant No. 2014J05001).

\end{document}